\numberwithin{equation}{section}
\theoremstyle{plain}
\newtheorem{theorem}{Theorem}
\theoremstyle{plain}
\theoremstyle{plain}
\theoremstyle{plain}
\newtheorem{lemma}{Lemma}
\theoremstyle{plain}
\theoremstyle{plain}
\newtheorem{conjecture}{Conjecture}
\theoremstyle{plain}
\theoremstyle{plain}
\newtheorem{prop}{Proposition}
\theoremstyle{plain}
\newtheorem*{definition}{Definition}
\theoremstyle{definition}
\theoremstyle{definition}
\theoremstyle{definition}
\theoremstyle{definition}
\DeclareMathOperator{\del}{Delete}
\DeclareMathOperator{\dels}{DeleteSave}
\DeclareMathOperator{\nd}{d}
\DeclareMathOperator{\nwd}{wd}
\renewcommand{\le}{\leqslant}
\renewcommand{\ge}{\geqslant}
\title{weak degeneracy of regular graphs}
\author{Yuxuan Yang}
\address{Department of Science, Beijing University of Posts and Telecommunications, Beijing, China}
\email{yangyx@bupt.edu.cn}
\keywords{weak degeneracy, chromatic number, graph coloring, regular graph}
\subjclass[2010]{05C15}
\begin{document}

\begin{abstract}
Motivated by the study of greedy algorithms for graph coloring, Bernshteyn and Lee introduced a generalization of graph degeneracy, which is called weak degeneracy. 
In this paper, we show the lower bound of the weak degeneracy for $d$-regular graphs is exactly $\lfloor d/2\rfloor +1$, which is tight. This result refutes the conjecture of Bernshteyn and Lee on this lower bound.
\end{abstract}

\maketitle

\thispagestyle{empty}

%
%

\section{Introduction}\label{sec1}
In this paper, all graphs are finite and simple. Let $G$ be a graph. We denote by $V(G)$ and $E(G)$ the vertex set and the edge set of the graph $G$, respectively. Let $S$ be a subset of $V(G)$. Denote by $G-S$ the subgraph obtained from $G$ by deleting the vertices in $S$ together with their incident edges. Especially, when $S=\{v\}$ we may use $G-v$ instead of $G-\{v\}$. Denote by $G[S]$ the subgraph of $G$ induced by the vertex set $S$. Denote by $\deg_G(v) $ the degree of a vertex $v$ in a graph $G$. If $\deg_G(v)=d$ for each vertex $v$ in the graph $G$, we call $G$ a $d$-regular graph.

Recall that for a graph $G$, $\chi (G)$ denotes its chromatic number, that is, the minimum number of colors necessary to color the vertices of $G$ so that adjacent vertices are colored differently. A well-studied generalization of graph coloring is list coloring, which was introduced independently by Vizing \cite{vizing} and Erdős et al. \cite{er79}. There are further generalizations, such as DP-chromatic number \cite{dp18}, and DP-paint number \cite{KKLZ20}. As we shall not discuss these parameters, other than saying that they are bounded by the weak degeneracy defined below, we omit the definitions and refer the reader to \cite{KKLZ20} for the definitions and discussion about these parameters.

For a graph $G$, the greedy coloring algorithm colors vertices one by one in order $v_1, v_2,\dots, v_n$, assigning $v_i$ the least-indexed color not used on its colored neighbors. An upper bound for the number of colors used in such a coloring is captured in the notion of graph degeneracy. This greedy algorithm works for all variations of chromatic numbers mentioned above, and it makes graph degeneracy an important graph parameter. 

Frequently, graph degeneracy and chromatic numbers have a big gap between them. For example, the degeneracy of $d$-regular graph is exactly $d$, and its chromatic number varies from 2 to $d+1$. It is therefore interesting to see if we can modify the greedy coloring procedure to “save” some of the colors and get a better bound. For example, when we color a vertex under the greedy algorithm, we can choose a color that one of its neighbors can not use if possible. Motivated by this, Bernshteyn and Lee \cite{bl23} introduced a new graph parameter of \textbf{weak degeneracy}. Weak degeneracy is a variation of degeneracy which shares many nice properties of degeneracy. Since then weak degeneracy has been paid a lot of attention (see for example \cite{Han2023, Wang2023, Wang2021, Zhou2023}). Here is the definition and notation.

\begin{definition}[Delete Operation]
Let $G$ be a graph and let $f:V(G)\rightarrow \mathbb{N}$ be a function. For a vertex $u\in V(G)$, the operation
\begin{equation*}
\del (G,f,u)    
\end{equation*}
outputs the graph $G^{\prime}=G-u$ and the function $f^{\prime} :V(G^{\prime})\rightarrow \mathbb{Z}$ given by
\begin{equation}
\label{eq1.1}
   f^{\prime}(v)=
\begin{cases}
f(v)-1, & \mbox{if } uv\in E(G);\\
f(v), & \mbox{otherwise}.
\end{cases}
\end{equation}
An application of the operation $\del$ is \textbf{legal} if the resulting function $f^{\prime}$ is nonnegative.
\end{definition}

\begin{definition}[DeleteSave Operation]
Let $G$ be a graph and let $f:V(G)\rightarrow \mathbb{N}$ be a function. For a pair of adjacent vertices $u,w\in V(G)$, the operation
\begin{equation*}
\dels (G,f,u, w)    
\end{equation*}
outputs the graph $G^{\prime}=G-u$ and the function $f^{\prime} :V(G^{\prime})\rightarrow \mathbb{Z}$ given by
\begin{equation}
\label{eq1.2}
   f^{\prime}(v)=
\begin{cases}
f(v)-1, & \mbox{if } uv\in E(G) \mbox{ and }v\neq w;\\
f(v), & \mbox{otherwise}.
\end{cases}
\end{equation}
An application of the operation $\dels$ is \textbf{legal} if $f(u)>f(w)$ and the resulting function $f^{\prime}$ is nonnegative.
\end{definition}

A graph $G$ is \textbf{f-degenerate} if it is possible to remove all vertices from $G$ by a sequence of legal applications of the operations $\del$. A graph $G$ is \textbf{weakly f-degenerate} if it is possible to remove all vertices from $G$ by a sequence of legal applications of the operations $\dels$ and $\del$. Given $d\in \mathbb{N}$, we say that $G$ is \textbf{d-degenerate} if it is f-degenerate with respect to the constant function of value $d$. Also, we say that $G$ is \textbf{weakly d-degenerate} if it is weakly f-degenerate with respect to the constant function of value $d$. The \textbf{degeneracy} of $G$, denoted by $\nd(G)$, is the minimum integer $d$ such that $G$ is d-degenerate. The \textbf{weak degeneracy} of $G$, denoted by $\nwd(G)$, is the minimum integer $d$ such that $G$ is weakly d-degenerate.

Bernshteyn and Lee \cite{bl23} gave the following inequalities. 
\begin{prop}
\cite{bl23}
\label{prop1}
    For any graph $G$, we always have
    \begin{equation*}
        \chi (G)\le \chi _{l} (G)\le \chi _{DP} (G)\le \chi_{DPP}(G)\le \nwd(G)+1\le \nd (G)+1,
    \end{equation*}
\end{prop}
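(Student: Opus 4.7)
The plan is to verify the chain of five inequalities from left to right, treating the easy ones briefly and concentrating on the one that actually uses the weak-degeneracy machinery.

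First I would dispatch the three leftmost inequalities $\chi(G)\le\chi_l(G)\le\chi_{DP}(G)\le\chi_{DPP}(G)$ by the standard embedding arguments: taking identical lists at every vertex turns a proper list-coloring into a proper coloring; turning a list assignment into the DP-cover where the fiber over $v$ is its list and edges of the cover identify equal colors reduces list-coloring to DP-coloring; and revealing all colors of a DP-cover in the first round of the online DP-paint game reduces DP-coloring to DP-paint. The rightmost inequality $\nwd(G)\le\nd(G)$ is also immediate: any witness that $G$ is $d$-degenerate consists solely of legal $\del$ operations, and these are permitted in the $\dels$-augmented game as well.

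The heart of the proof is the middle inequality $\chi_{DPP}(G)\le\nwd(G)+1$. Setting $d=\nwd(G)$, I would fix a sequence $\Sigma$ of $\del$ and $\dels$ operations starting from the constant function $f\equiv d$ that removes all vertices of $G$, and then use $\Sigma$ to produce a winning strategy for the painter in the DP-paint game on $G$ with $d+1$ colors. The argument would be by induction on $|V(G)|$, maintaining the invariant that after any initial segment of the game the pair (remaining graph, current ``budget'' function) is still weakly $f$-degenerate, where $f(v)$ equals the number of colors still available at $v$ in the DP-paint cover minus one. Concretely, if the next operation in $\Sigma$ is $\del(G,f,u)$, then after the adversary reveals the current round I would paint $u$ by choosing any remaining available color at $u$; since $f(u)\ge 0$, at least one color is available, and the update on its neighbors coincides with $\del$'s subtraction by $1$ across $N(u)$. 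If the next operation is $\dels(G,f,u,w)$ with $f(u)>f(w)$, the strict inequality gives $u$ at least one more available color than $w$, so by a counting argument at least one color at $u$ is either not matched to any color at $w$ in the DP-cover or is matched to a color at $w$ that the adversary has already eliminated; painting $u$ with such a color reproduces precisely the $\dels$ update, leaving $w$'s budget untouched.

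The main obstacle will be the bookkeeping in the $\dels$ step, because the DP-paint setting allows the adversary to reveal only a subset of colors each round and the matching between fibers is arbitrary. I would handle this by deferring the painter's move in $\Sigma$ until a round in which the relevant colors at both $u$ and $w$ are on offer, and by phrasing the invariant in terms of ``currently usable'' colors rather than a static list. Once this correspondence between the offline $\dels$ operation and the online painter's move is made precise, the induction closes and the bound $\chi_{DPP}(G)\le d+1=\nwd(G)+1$ follows, completing the chain.
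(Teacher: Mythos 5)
First, a point of order: the paper contains no proof of this proposition. It is quoted from Bernshteyn and Lee \cite{bl23} (the paper does not even define $\chi_{DP}$ and $\chi_{DPP}$), so your argument can only be judged on its own merits and against the source, not against anything in this paper. With that said, the outer inequalities in your first paragraph are fine: $\chi(G)\le\chi_l(G)\le\chi_{DP}(G)\le\chi_{DPP}(G)$ follow from the standard specialization arguments you describe, and $\nwd(G)\le\nd(G)$ is immediate since a witnessing sequence of legal $\del$ operations is also a witnessing sequence in the game that additionally permits $\dels$.

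The genuine gap is in the central step $\chi_{DPP}(G)\le\nwd(G)+1$. Your strategy processes the removal sequence $\Sigma$ chronologically and has the Painter ``paint $u$'' when the next operation of $\Sigma$ acts on $u$. But in the DP-paint game the Lister, not the Painter, decides which vertices are presented in each round and reveals only one color node per presented vertex per round; the Painter cannot color $u$ in a round where $u$ is absent and never sees the whole fiber over $u$ at once. In particular, the counting argument you invoke for $\dels(G,f,u,w)$ --- ``$u$ has one more available color than $w$, so some color of $u$ is unmatched or matched to an eliminated color of $w$'' --- is an offline argument: it does prove $\chi_{DP}(G)\le\nwd(G)+1$, but it does not yield a round-by-round decision rule. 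Your proposed repair, deferring the move until a round in which the relevant colors of both $u$ and $w$ are simultaneously on offer, fails on two counts: each round in which $u$ is presented but left uncolored consumes one of the finitely many presentations $u$ is allowed before the Painter loses, and the Lister is under no obligation ever to present $u$ and $w$ together in a favorable configuration. The proof in \cite{bl23} is structured differently: it inducts on $|V(G)|$, peels off only the \emph{first} operation of $\Sigma$ (acting on some vertex $u$), obtains a Painter strategy for the reduced pair on $G-u$ by induction, and composes the two, answering each round as the Lister presents it --- roughly, the Painter adds $u$ to the recursively computed independent set whenever no presented neighbor matched to $u$'s current color node has been selected, with the hypothesis $f(u)>f(w)$ in the $\dels$ case guaranteeing that $u$ is colored within its budget while $w$ is never charged. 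Until your argument is restructured along such lines, the inequality $\chi_{DPP}(G)\le\nwd(G)+1$ is not established.
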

where $\chi_{DP}(G)$ is the DP-chromatic number of $G$, and $\chi_{DPP}(G)$ is the DP-paint number of $G$.

From Proposition \ref{prop1}, $\nwd (G) + 1$ is an upper bound for many graph coloring parameters. This is actually a direct result from the greedy coloring algorithm. Then, it is interesting to determine the weak degeneracy of a graph.

As far as lower bounds on weak degeneracy are concerned, by a double counting argument, Bernshteyn and Lee \cite{bl23} gave the following result.
\begin{prop}
\cite{bl23}
\label{thm0}
Let G be a $d$-regular graph with $n\ge 2$ vertices. Then $\nwd(G)\ge d-\sqrt{2n}$.
\end{prop}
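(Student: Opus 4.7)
The plan is to fix any legal sequence of $\del$- and $\dels$-operations that witnesses weak $k$-degeneracy, with $k := \nwd(G)$, extract from it a quadratic lower bound on the total number of $\dels$-operations, and contrast this with the trivial upper bound ``each vertex initiates at most one $\dels$ when it is itself removed'', which caps this total by $n$. The statement is trivial when $d - k \le 0$, so from now on set $\Delta := d - k \ge 1$; note that $\Delta \le d \le n-1$.

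Label the removal order $v_1, v_2, \ldots, v_n$, and for each $i$ let $a_i$ be the number of neighbors of $v_i$ still present in the graph when $v_i$ is removed, and $S_i$ the number of $\dels$-operations that spared $v_i$ (i.e.\ for which $v_i$ played the role of $w$) strictly before $v_i$'s own removal. Each of the $d - a_i$ neighbors of $v_i$ removed earlier either decremented $f(v_i)$ or spared it, so the surviving $f$-value of $v_i$ at the moment of its removal is
\[
r_i \;=\; k - \bigl((d - a_i) - S_i\bigr) \;=\; k - d + a_i + S_i,
\]
and legality forces $r_i \ge 0$, i.e.\ $S_i \ge \Delta - a_i$.

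For the last $\Delta$ vertices in the ordering, namely $v_j$ with $n - \Delta < j \le n$, only $n - j$ vertices are removed after $v_j$, so $a_j \le n - j < \Delta$ and the previous inequality becomes $S_j \ge \Delta - (n-j)$. Summing,
\[
\sum_{j=n-\Delta+1}^{n} S_j \;\ge\; \sum_{i=0}^{\Delta-1} (\Delta - i) \;=\; \binom{\Delta+1}{2}.
\]

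To finish, observe that $\sum_{i=1}^{n} S_i$ equals the total number of $\dels$-operations executed (each contributes exactly one save), and that, since each vertex is removed exactly once and plays the role of $u$ in at most one $\dels$, this total is at most $n$. Hence $\binom{\Delta+1}{2} \le n$, so $\Delta^2 \le \Delta(\Delta+1) \le 2n$, i.e.\ $d - k \le \sqrt{2n}$, which is the claimed bound. There is no real obstacle; the only point requiring care is the bookkeeping identity $r_i = k - d + a_i + S_i$, which is immediate from the definitions of $\del$ and $\dels$, and the quadratic saving count is then the natural consequence of late-removed vertices having too few surviving neighbors to absorb the deficit $\Delta$.
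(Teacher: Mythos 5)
Your argument is correct and complete. Note that the paper itself gives no proof of this proposition: it is quoted from \cite{bl23} with only the remark that it follows ``by a double counting argument,'' and your write-up is exactly that double count, carried out cleanly. The two sides of the count are sound: each $\dels$ spares precisely one still-present vertex $w$, which must be a neighbor of the removed vertex, so your bookkeeping identity $r_i=k-(d-a_i)+S_i$ holds and legality of the preceding operations gives $r_i\ge 0$; and since each removal is a single operation, the total number of saves is at most $n$, while the last $\Delta=d-k$ vertices force at least $\binom{\Delta+1}{2}$ saves because they have too few surviving neighbors to absorb the deficit. The only place in the paper with a comparable argument is the proof of Theorem 2 (the bound $\nwd(G)\ge\lfloor d/2\rfloor+1$), which instead tracks the global potential $s_i=\sum_v f_i(v)$ and balances the total numbers of $\dels$ initiations and saves against the per-step drop of $s_i$; your count is more local (per-vertex save deficits concentrated on the tail of the removal order) and is what yields the $\sqrt{2n}$-type dependence on $n$, whereas the paper's potential argument yields the $n$-free bound $d/2$. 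Both are legitimate and serve different purposes; your proof of the stated proposition stands on its own.
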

In particular, if $n= O(d)$, then $\nwd(G)\ge d-O(\sqrt{d})$. Note that the upper bound $d$ is trivial and can be reached by the complete graph. It indicates that the weak degeneracy of a $d$-regular graph seems to be close to $d$. Then, they gave a conjecture about this.

\begin{conjecture}
\cite{bl23}
\label{conj}
Every $d$-regular graph $G$ satisfies $\nwd(G)\ge d-O(\sqrt{d})$.
\end{conjecture}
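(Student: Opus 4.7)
The plan is to strengthen the argument underlying Proposition \ref{thm0} so that its conclusion $\nwd(G)\ge d-\sqrt{2n}$ no longer degrades with $n$. Since Proposition \ref{thm0} already delivers the target bound $d-O(\sqrt{d})$ in the regime $n=O(d)$, the real difficulty lies in $d$-regular graphs with $n\gg d$. I would try to combine a localization step with a refined accounting of $\dels$ operations.

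The first step is to \emph{localize}. Weak degeneracy is monotone under induced subgraphs, so it suffices to exhibit, inside every $d$-regular graph $G$, an induced subgraph $H$ on $O(d)$ vertices with minimum degree at least $d-O(\sqrt{d})$; Proposition \ref{thm0} applied to $H$ then yields $\nwd(G)\ge \nwd(H)\ge d-O(\sqrt{d})$. A natural candidate for $H$ is obtained by sampling a random set $S\subseteq V(G)$ of size $\Theta(d)$ and iteratively pruning vertices whose induced degree in $G[S]$ is too low. Concentration inequalities would be needed to ensure a positive fraction of $S$ survives the pruning, and dense local structure in $G$ would have to either help or be ruled out beforehand.

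The second step refines the double count. Let $v_1,\dots,v_n$ denote the order in which vertices are removed, let $a_i$ be the number of neighbors of $v_i$ already removed before step $i$, and let $s_i$ be the number of $\dels$ operations in which $v_i$ plays the role of the saved vertex $w$. Legality forces $k-a_i+s_i\ge 0$, and summing over $i$ gives
\begin{equation*}
kn-\tfrac{1}{2}dn+\sum_{i=1}^{n}s_i \;\ge\; 0.
\end{equation*}
The naive bounds $\sum_i a_i=dn/2$ and $\sum_i s_i\le n$ only yield $k\ge d/2-1$. To push $k$ up to $d-O(\sqrt{d})$, I would try to prove $\sum_i s_i=o(n)$ by exploiting the strict inequality $f(u)>f(w)$ required by each $\dels$ step: such strict drops cannot compound indefinitely, and long save-chains should force a local configuration incompatible with $d$-regularity.

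The hard part is exactly this final bound on $\sum_i s_i$. Closing the gap between the naive $\sum_i s_i\le n$ and a sublinear budget requires a genuine structural obstruction that $d$-regularity places on legal sequences of $\dels$ operations — for example, a vertex-type invariant bounding how often a single vertex can be saved on average, or a combinatorial argument showing that chains of strict $f$-drops must traverse forbidden dense subconfigurations. Identifying such an obstruction is the crux of the conjecture, and the likely reason it has resisted proof.
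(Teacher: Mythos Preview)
The conjecture you are trying to prove is \emph{false}; the paper does not prove it but refutes it. Theorem~\ref{thm1} constructs, for every $d$, a $d$-regular graph $G$ with $\nwd(G)=\lfloor d/2\rfloor+1$, which is far below $d-O(\sqrt{d})$. So your program cannot succeed, and the ``crux'' you identify at the end is not a difficulty to be overcome but an actual obstruction.

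Concretely, both of your two steps fail on the paper's construction. For the localization step, a $d$-regular graph need not contain any induced subgraph on $O(d)$ vertices with minimum degree $d-O(\sqrt{d})$; the graph built in Section~3 is locally sparse (each vertex has neighborhood essentially a union of paths), so random sampling and pruning will not produce the dense $H$ you require. For the second step, the bound $\sum_i s_i=o(n)$ is simply not true: in the paper's deletion sequence a constant fraction of the vertices (namely all $a_i$ with $i\in[k+1,s-k]$) are removed via $\dels$, so $\sum_i s_i=\Theta(n)$. The strict inequality $f(u)>f(w)$ does not prevent this, because the saved vertices all lie in the independent set $V_2$ and are repeatedly saved without their $f$-values ever dropping below zero. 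In fact your ``naive'' inequality $k\ge d/2-1$ is, up to the additive constant, the truth: Theorem~\ref{thm2} sharpens it to $k\ge\lfloor d/2\rfloor+1$, and Theorem~\ref{thm1} shows this is tight.
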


In this paper, we will refute this conjecture by constructing a $d$-regular graph with weak degeneracy $\lfloor d/2\rfloor +1$. 
\begin{theorem}
\label{thm1}
    There exists a $d$-regular graph with weak degeneracy $\lfloor d/2\rfloor +1$.
\end{theorem}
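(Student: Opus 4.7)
The plan is to exhibit an explicit $d$-regular graph $G$ and show that $\nwd(G) = \lfloor d/2 \rfloor + 1$. For the upper bound, I would write down an explicit vertex ordering with prescribed Delete and DeleteSave operations; for the lower bound, I would invoke the general lower bound $\nwd(H) \ge \lfloor d/2 \rfloor + 1$ for any $d$-regular graph $H$ (this is the matching half of the ``exactly $\lfloor d/2 \rfloor + 1$'' claim in the abstract, and together with the construction below it refutes Conjecture A).

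A natural candidate for $G$ is a bipartite graph with a carefully structured pattern of non-edges — for instance $K_{d+1,d+1}$ with a perfect matching removed, or a close variant tailored to the parity of $d$ — and one checks directly that every vertex of $G$ has degree exactly $d$. Setting $f_0 \equiv \lfloor d/2 \rfloor + 1$, I would then write down an explicit ordering $v_1, v_2, \ldots, v_{|V(G)|}$ together with a prescribed operation at each step: either a Delete or a DeleteSave with a specific saved neighbor. The sequence is designed so that a distinguished ``target'' set $S \subseteq V(G)$ is repeatedly protected by DeleteSave, while the rem
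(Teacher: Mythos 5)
Your overall strategy --- an explicit construction for the upper bound combined with the general lower bound $\nwd(H)\ge\lfloor d/2\rfloor+1$ for $d$-regular $H$ --- is indeed the paper's strategy, but your concrete candidate graph cannot work, and this is not a repairable detail. The graph $K_{d+1,d+1}$ minus a perfect matching (and any ``close variant'' on $O(d)$ vertices) is ruled out by the Bernshteyn--Lee double-counting bound quoted in the paper: a $d$-regular graph on $n$ vertices satisfies $\nwd(G)\ge d-\sqrt{2n}$. With $n=2d+2$ this gives $\nwd(G)\ge d-2\sqrt{d+1}$, which exceeds $\lfloor d/2\rfloor+1$ once $d$ is larger than about $20$. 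More generally, any witness for Theorem \ref{thm1} must have at least roughly $(d-2)^2/8$ vertices, so no dense ``complete-bipartite-like'' graph can serve; the construction is forced to be large and locally sparse. Your proposal also stops before specifying the ordering, the saved neighbors, or the legality verification, so even the upper-bound half is not actually carried out.

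For comparison, the paper's construction (for $d=2k+1$) takes three sequences $A,B,C$ of length $s=6k^3+6k^2+2k$, joined as powers of paths ($a_i\sim a_j$ iff $|i-j|\le k-1$, and $b_i\sim b_j$, $c_i\sim c_j$ iff $|i-j|\le k$, plus the edges $a_ib_i$, $a_ic_i$), so that each $a_i$ has one degree to spare; these spare degrees are matched to a small auxiliary set $V_2$ of size $3k(k+1)$, each vertex of which receives $2k$ such edges. Deleting in the order $a_1,b_1,c_1,a_2,b_2,c_2,\dots$ one checks that each $a_i$ still has $f(a_i)=2$ when it is deleted, so a $\dels(G,f,a_i,\phi(a_i))$ saving its $V_2$-neighbor is legal whenever needed, and this protects every vertex of $V_2$ from ever reaching a negative value. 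The even case then follows from an auxiliary step (Theorem \ref{thm4}) that raises regularity by one while raising weak degeneracy by at most one. The essential idea you are missing is this ``long sweep with a small protected set attached sparsely along the sweep,'' which is what lets the graph evade the $d-\sqrt{2n}$ obstruction.
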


Also, we will show this is best possible by proving $\lfloor d/2\rfloor +1$ is exactly the lower bound.
\begin{theorem}
\label{thm2}
    Every $d$-regular graph $G$ satisfies $\nwd(G)\ge \lfloor d/2\rfloor +1$.
\end{theorem}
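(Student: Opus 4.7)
The plan is to exhibit a monovariant that forbids the purported weak $k$-degeneracy sequence. Define the potential
\[
\mu(G,f) \;=\; \sum_{v \in V(G)} f(v) \;-\; |E(G)|.
\]
The first step is to check that $\mu$ is non-increasing under every legal operation. A legal $\del(G,f,u)$ decreases $\sum_v f(v)$ by $f(u) + \deg_G(u)$ (the contribution of $u$ disappears and each neighbor of $u$ is decremented by $1$) and decreases $|E(G)|$ by $\deg_G(u)$, so $\Delta\mu = -f(u) \le 0$. A legal $\dels(G,f,u,w)$ decreases $\sum_v f(v)$ by $f(u) + \deg_G(u) - 1$ (the saved neighbor $w$ escapes decrement) and decreases $|E(G)|$ by $\deg_G(u)$, so $\Delta\mu = 1 - f(u)$; the legality condition $f(u) > f(w) \ge 0$ forces $f(u) \ge 1$, whence $\Delta\mu \le 0$.

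Next, suppose for contradiction that some $d$-regular graph $G$ on $n$ vertices (with $d \ge 1$) is weakly $k$-degenerate for $k = \lfloor d/2 \rfloor$, via a legal sequence starting from $f \equiv k$. Write $\mu_i$ for the value of $\mu$ just before the $i$-th operation, so $\mu_1 = nk - nd/2$ and $\mu_{n+1} = 0$ (empty graph). Monotonicity gives $\mu_1 \ge \mu_{n+1} = 0$, i.e.\ $k \ge d/2$. If $d$ is odd, then $k = (d-1)/2 < d/2$, so $\mu_1 = -n/2 < 0$, an immediate contradiction.

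If $d$ is even, then $k = d/2$ and $\mu_1 = 0$, so $\mu$ must stay at $0$ along the whole sequence. However, the initial $f$ is constant, so no $\dels$ is legal at step $1$ (it would require two vertices with distinct $f$-values); hence the first operation is a $\del$ that removes some $v_1$ with $f(v_1) = k \ge 1$, producing $\mu_2 = -k \le -1$ and contradicting $\mu_2 \ge \mu_{n+1} = 0$. The main conceptual step is the discovery of the invariant itself: once one notices that each operation reduces $\sum f$ and $|E|$ by amounts differing by at most $1$, with the discrepancy coming precisely from a save, the candidate $\mu = \sum f - |E|$ is essentially forced, after which the argument reduces to a parity check on $nk$ against $nd/2$ plus a first-step inspection in the even case.
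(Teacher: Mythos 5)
Your proof is correct. It rests on the same double-counting idea as the paper's, but the packaging is genuinely different and buys a real simplification. The paper tracks only the running total $s_i=\sum_v f_i(v)$ and must therefore keep separate books on the saves: it introduces $x_i$ (the indicator that step $i$ is a $\dels$) and $y_j$ (the number of times $v_j$ is the saved vertex), proves the per-step bound $s_i-s_{i+1}\ge d/2+x_i-y_i$, and cancels the save terms only after summing, via the global identity $\sum_i x_i=\sum_i y_i$; the contradiction then comes from forcing equality everywhere, deducing $f_i(v_i)=x_i$ for all $i$, and inspecting $i=1$. By folding $|E(G)|$ into the potential you make the decrement local and exact: one checks that $\mu_i-\mu_{i+1}=f_i(v_i)-x_i\ge 0$ at every step, which is precisely the quantity the paper's equality analysis isolates, so the saves never need to be counted globally and the odd case dies instantly from $\mu_1=-n/2<0$. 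The even case is then handled the same way in both proofs: equality throughout forces the first operation to be a $\del$ of a vertex with positive $f$-value, which contradicts monotonicity. A small bonus of your formulation is that it makes visible exactly where $d\ge 1$ is used (so that $k\ge 1$ in the even case), which is the point at which the bound genuinely fails for $0$-regular graphs.
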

Since the degeneracy of $d$-regular graph is exactly $d$ and the weak degeneracy can be significantly smaller, we can say that in some cases the weak degeneracy of regular graphs is a better bound for those graph coloring parameters under Proposition \ref{prop1}.
%
%
\section{lower bound of weak degeneracy}
By a counting argument, we prove Theorem \ref{thm2} first.
\begin{proof}[Proof of Theorem \ref{thm2}]
    Suppose $G$ is a $d$-regular graph with $\nwd(G)\le \lfloor d/2 \rfloor$. Let $k=\nwd(G)$. By the definition of weak degeneracy, if we start with the constant function $f_1$ of value $k$ on $G$, there exists a sequence of legal applications of the operations $\del$ and $\dels$, which removes all vertices from $G$. We label the vertices 
    \begin{equation*}
        v_1,v_2,\dots , v_n
    \end{equation*}
    by the removing order, and let
    \begin{equation*}
        f_1, f_2,\dots , f_n \quad \mbox{ and } \quad G_1,G_2,\dots G_n
    \end{equation*} 
    denotes the corresponding functions and graphs during this procedure. To the specific, $f_i$ is the function after removing $v_1,v_2,\dots, v_{i-1}$ and it is a function on $G_i=G[\{v_{i},v_{i+1},\dots, v_n\}]$. We are interested in the value of the total sum of each function $f_i$, denoted by
    \begin{equation*}
        s_i:=\sum_{v\in G_i}f_i(v)=\sum_{j=i}^n f_i(v_{j}).
    \end{equation*}
    At the same time, the operation $\dels$ plays an important role in this procedure. We need two sequences to capture its effect. Let
    \begin{equation*}
           x_i=
    \begin{cases}
    1, & \mbox{if the operation of removing } v_i \mbox{ is }\dels;\\
    0, & \mbox{if the operation of removing } v_i \mbox{ is }\del.
    \end{cases}
    \end{equation*}
    Let $y_j$ denotes the number of $i$ that the operation $\dels(G_i,f_i,v_i,v_j)$ we used. It is not hard to find that 
    \begin{equation*}
        \sum_{i=1}^n x_i= \sum_{i=1}^n y_i,
    \end{equation*}
    since they both count the total number of $\dels$ we used. 

    Now, we consider $f_i(v_i)$. After $i-1$ operations, the function value decreased from $f_1(v_i)=k$ to $f_i(v_i)$. It means that $k-f_i(v_i)$ neighbors of $v_i$ were removed before $v_i$, which led to the decrease of the function value of $v_i$. Additionally, there are $y_i$ neighbors that were removed before $v_i$ without changing the function value of $v_i$. In total, $k-f_i(v_i)+y_i$ neighbors of $v_i$ were removed, then there are $d-(k-f_i(v_i)+y_i)$ remaining neighbors of $v_i$ in $G_i$. From \eqref{eq1.1} and \eqref{eq1.2}, we know the removal of $v_i$ makes that the total function value of $v_i$'s neighbors in $G_i$ decreases by $d-(k-f_i(v_i)+y_i)-x_i$.
    
    Comparing $s_i$ and $s_{i+1}$, we have 
    \begin{equation*}
        s_i-s_{i+1}=f_i(v_i)+d-(k-f_i(v_i)+y_i)-x_i.
    \end{equation*}
    Observe that $f_i(v_i)\ge x_i$. Since $k\le \lfloor d/2 \rfloor\le d/2$, we know
    \begin{equation}
    \label{eq2.1}
        s_i-s_{i+1}\ge d/2+x_i-y_i.
    \end{equation}
    Note that \eqref{eq2.1} works for $i\in \{1,2,\dots, n-1\}$. For $i=n$, it also works in principle, but we haven't defined $s_{n+1}$. Actually, it is natural to let $s_{n+1}=0$ since it is the total sum of a function with an empty domain. To avoid confusion, we investigate the case $i=n$ in detail. We have a single-vertex graph $G_n$ and we use $\del$ operation for $v_n$, then $x_n=0$ and $f_n(v_n)\ge 0$. Also, $k-f_n(v_n)+y_n$ has to be $d$, since $v_n$ has no neighbors in $G_n$. From, $y_n=d-k+f_n(v_n)\ge d/2$, we have 
    \begin{equation}
    \label{eq2.2}
        s_n=f_n(v_n)\ge 0 \ge d/2+x_n-y_n.
    \end{equation}
    Combining \eqref{eq2.1} and $\eqref{eq2.2}$, we know
    \begin{equation}
        s_1\ge nd/2+\sum_{i=1}^n x_i-\sum_{i=1}^n y_i=nd/2.
    \end{equation}
    On the other hand $s_1=kn\le nd/2$. It means that all the equal signs hold in the analysis above. Then, we have $f_i(v_i)=x_i$. However, $f_1$ is a constant function, so we can't use $\dels$ for $v_1$. Then, 
    \begin{equation*}
        f_1(v_1)=k\neq 0=x_1
    \end{equation*}
    gives a contradiction.
\end{proof}

%
%
\section{regular graphs with low weak degeneracy}
To prove Theorem \ref{thm1}, we start with the odd case. For each positive integer $k\ge 0$, we construct a graph $G$ such that $\nwd(G)\le k+1$ and $G$ is $(2k+1)$-regular.

Let $s>2k$ be a large integer to be defined and let $V_1$ be the disjoint union of  
\begin{equation*}
    \begin{aligned}
        &A=\{a_1,a_2,\dots,a_s\},\\
        &B=\{b_1,b_2,\dots,b_s\},\\
        &C=\{c_1,c_2,\dots,c_s\}.
    \end{aligned}
\end{equation*}
Here, $V_1=A\cup B\cup C$ is a subset of $V(G)$ and we assign the edges on $G[V_1]$ first. For each $i,j\in\{1,2,\dots,s\}$,
\begin{equation}
\label{edgev1}
\begin{aligned}
     &a_i, a_j \mbox{ are adjacent} \quad \Leftrightarrow \quad |i-j|\in[1,k-1],\\
    &b_i, b_j \mbox{ are adjacent} \quad \Leftrightarrow \quad |i-j|\in[1,k],\\
    &c_i, c_j \mbox{ are adjacent} \quad \Leftrightarrow \quad |i-j|\in[1,k],\\
    &a_i, b_i \mbox{ are adjacent} , \quad a_i, c_i \mbox{ are adjacent}.
\end{aligned}
\end{equation}

\begin{figure}[h!]
\centering
\includegraphics[width=0.99\linewidth]{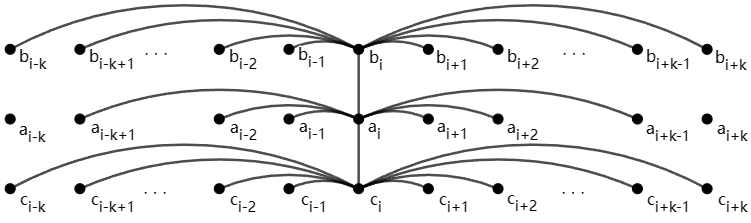}
\caption{ the main structure of $G(V_1)$}
\label{fig1}
\end{figure}

All the edges of $G[V_1]$ are given by \eqref{edgev1} and we denote this edge set by $E_1$. Figure \ref{fig1} shows the main structure of $G[V_1]$ by illustrating the neighbors of $a_i,b_i,c_i$. As you can see, in this structure $a_i,b_i$ and $c_i$ have $2k, 2k+1, 2k+1$ neighbors, respectively. Actually, this main structure only works for $i\in [k+1,s-k]$, and the structure is fragmentary when $i\le k$ or $i\ge s-k+1$. We collect all these abnormal vertices as
\begin{equation}
    D:=\{a_i|i\notin [k+1,s-k]\}\cup\{b_i|i\notin [k+1,s-k]\}\cup\{c_i|i\notin [k+1,s-k]\},
\end{equation}
and add pendent edges to them such that their degree is $2k+1$. In other words, we have a new vertex set $V_2$ of size
\begin{equation}
    |V_2|=\sum_{v\in D} (2k+1-\deg_{G[V_1]}(v)),
\end{equation}
and each $v\in D$ has $2k+1-\deg_{G[V_1]}(v)$ distinct neighbors in $V_2$. This edge set between $D$ and $V_2$ is denoted by $E_2$, which is shown in Figure \ref{fig2}.
\begin{figure}[h!]
\centering
\includegraphics[width=0.4\linewidth]{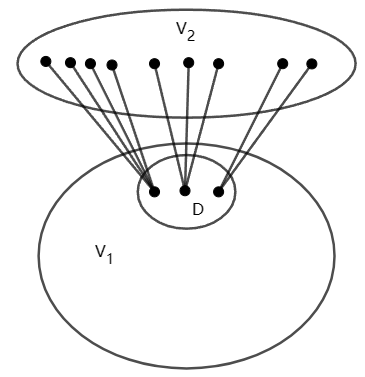}
\caption{ The edges between $D$ and $V_2$}
\label{fig2}
\end{figure}

The last step to construct a $(2k+1)$-regular graph is to add edges between $V_2$ and $V_1\backslash D$. By checking the current degree of each vertex in $V_1\cup V_2$, we can see that each $v\in V_2$ needs $2k$ more neighbors and $a_i$ needs one more neighbor for each $i\in[k+1,s-k]$. We simply choose a proper $s$ such that their demands match. A simple computation gives
\begin{equation}
    \begin{aligned}
        &|V_2|=6\cdot (1+2+\dots+k)=3k(k+1),\\
        &s=2k+2k|V_2|=6k^3+6k^2+2k.
    \end{aligned}
\end{equation}
To be specific, we have a map 
\begin{equation*}
    \phi: \{a_i|i\in[k+1,s-k]\}\rightarrow V_2
\end{equation*}
such that each vertex in $V_2$ has size-$2k$ preimage and we assign edges according to $\phi$, which means $v$ is adjacent to $\phi(v)$ for each $v$ in the domain. We denote the set of these edges by $E_3$. The construction of $G$ is done with $V(G)=V_1\cup V_2$ and $E(G)=E_1\cup E_2\cup E_3$. It is trivial to check that $G$ is $(2k+1)$-regular. 

We need to show that $\nwd(G)$ is at most $k+1$, so we start with a constant function of value $k+1$. We use $\del$ or $\dels$ in the following order on the vertice set:
\begin{equation*}
    a_1,b_1,c_1,a_2,b_2,c_2,a_3,b_3,c_3,\dots, a_s,b_s,c_s, V_2.
\end{equation*}
During this deletion process, we use $\dels(G,f,a_i,\phi(a_i))$ for $a_i\in V_1\backslash D$ when it is legal, and use $\del$ for all the other vertices. Note that $f$ represents the function on the vertex set at the current step of deletion, and it keeps changing during the process. We have the following observations.

\begin{lemma}
\label{lm2}
    Under the deletion order above, for each $v\in V_1$, at the step of deleting $v$, we have $f(v)\ge 0$.
\end{lemma}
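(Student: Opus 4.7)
The plan is to track, for each $v\in V_1$, the value of $f(v)$ just before $v$ is removed. Starting from $f(v)=k+1$, the quantity $f(v)$ drops by one every time a neighbor of $v$ is deleted in a $\del$ step or in a $\dels$ step that does not save $v$, and is unchanged whenever $v$ itself is the saved vertex of some $\dels$. The first key observation is that in the prescribed procedure the only $\dels$ operations used are of the form $\dels(G,f,a_i,\phi(a_i))$, and the saved vertex $\phi(a_i)$ always lies in $V_2$. Hence no vertex of $V_1$ is ever saved, and for $v\in V_1$ we have
\[
 f(v)=(k+1)-m(v),
\]
where $m(v)$ is the number of neighbors of $v$ that were deleted before $v$ in the ordering $a_1,b_1,c_1,\dots,a_s,b_s,c_s,V_2$. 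The lemma therefore reduces to checking $m(v)\le k+1$ for each $v\in V_1$.

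I would verify this by a short case analysis on whether $v=a_i$, $b_i$, or $c_i$, reading the adjacencies off \eqref{edgev1} and using that $V_2$ is deleted strictly after $V_1$ (so neither pendant neighbors in $V_2$ nor the image $\phi(a_i)$ contribute to $m(v)$). For $v=a_i$, the only candidates for earlier-deleted neighbors are $A$-vertices at index-distance at most $k-1$ on the left, so $m(a_i)\le k-1$ and $f(a_i)\ge 2$. For $v=b_i$ the eligible earlier neighbors are $a_i$ together with $b_{i-k},\dots,b_{i-1}$, giving $m(b_i)\le k+1$ and $f(b_i)\ge 0$; the case $v=c_i$ is symmetric, yielding $m(c_i)\le k+1$ and $f(c_i)\ge 0$. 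For boundary vertices $v\in D$ the relevant index ranges only truncate, so these bounds can only improve.

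The one piece of bookkeeping to keep straight is the adjacency pattern in \eqref{edgev1}: the sets $B$ and $C$ are mutually independent, and no $a_j$ with $j\ne i$ is adjacent to $b_i$ or $c_i$. Combined with the fact that $\phi$ takes values exclusively in $V_2$, these two structural observations are precisely what prevents $f(b_i)$ or $f(c_i)$ from falling below zero and what forces every saved vertex of a $\dels$ to land outside $V_1$. So the main obstacle is not genuinely difficult; it is simply the care needed to list, for each type of vertex, exactly which of its neighbors can have been deleted by the time of its own removal.
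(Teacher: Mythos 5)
Your proposal is correct and is essentially the paper's argument: the paper's proof likewise reduces the lemma to showing that at most $k+1$ neighbors of $v$ have been deleted before $v$, citing the fact that deletions up to that point occur only in $V_1$ and reading the bound off \eqref{edgev1}. Your version merely makes the case analysis ($m(a_i)\le k-1$, $m(b_i)\le k+1$, $m(c_i)\le k+1$) and the observation that no $V_1$-vertex is ever the saved vertex explicit, which the paper leaves to the reader.
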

\begin{proof}
    It suffices to show that there are at most $k+1$ neighbors that have been deleted at the step of deleting $v$. This is true since the deletion only happens in $V_1$ and the construction of $E_1$ in \eqref{edgev1} ensures this fact.
\end{proof}
Lemma \ref{lm2} shows that the deletions of $V_1$ are all legal.
\begin{lemma}
\label{lm1}
    Under the deletion order above, for each $a_i\in V_1\backslash D$, at the step of deleting $a_i$, we have $f(a_i)=2$.
\end{lemma}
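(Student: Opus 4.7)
The plan is to track the value $f(a_i)$ step by step throughout the deletion procedure, starting from $f_1(a_i) = k+1$ and identifying precisely those earlier deletions that decrement it. Since the only way $f(a_i)$ can change is through deletions of its neighbors, I would begin by enumerating the neighbors of $a_i$ using the construction. From \eqref{edgev1} and the definition of $E_3$, the neighbors of $a_i$ are the $2k-2$ vertices $a_{i-k+1},\dots,a_{i-1},a_{i+1},\dots,a_{i+k-1}$, the two vertices $b_i,c_i$, and the single vertex $\phi(a_i)\in V_2$, totalling $2k+1$.

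Next I would determine which of those neighbors have already been removed at the moment $a_i$ is about to be deleted. Under the deletion order $a_1,b_1,c_1,a_2,b_2,c_2,\dots,a_s,b_s,c_s,V_2$, the vertices removed prior to $a_i$ are exactly $\{a_j,b_j,c_j:j<i\}$; intersecting with the neighbor list leaves only $a_{i-k+1},\dots,a_{i-1}$, which is a set of $k-1$ vertices (here I use $i\in[k+1,s-k]$, so the indices are valid).

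Then I analyze the effect on $f(a_i)$ of each of these $k-1$ deletions. If $a_j$ is removed via $\del$, then every neighbor's function value drops by $1$, so $f(a_i)$ decreases by $1$. If $a_j$ is removed via $\dels(G,f,a_j,\phi(a_j))$, then the function value drops by $1$ at every neighbor except the saved vertex $\phi(a_j)$; but $\phi(a_j)\in V_2$ while $a_i\in V_1$, so $a_i\neq\phi(a_j)$ and $f(a_i)$ still drops by $1$. In either case the contribution is exactly $-1$. Summing, at the step of deleting $a_i$ we get $f(a_i)=(k+1)-(k-1)=2$, as required.

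There is essentially no obstacle beyond checking that the only possible save target for the removal of $a_j$ is in $V_2$, which is immediate from the protocol (we only ever use $\dels(G,f,a_j,\phi(a_j))$ with $\phi(a_j)\in V_2$). Legality of those prior $\dels$/$\del$ steps is handled by Lemma \ref{lm2} and is not needed to compute $f(a_i)$, since the change in $f(a_i)$ depends only on whether each neighbor is saved, not on whether the operation was legal.
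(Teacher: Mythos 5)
Your proof is correct and follows essentially the same route as the paper's: identify that exactly the $k-1$ neighbors $a_{i-k+1},\dots,a_{i-1}$ have been removed before $a_i$, note that $a_i$ is never the saved vertex of a $\dels$ (since all save targets lie in $V_2$), and conclude $f(a_i)=(k+1)-(k-1)=2$. Your version just spells out the neighbor enumeration and the case analysis of $\del$ versus $\dels$ more explicitly than the paper does.
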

\begin{proof}
    $a_i\notin V_2$ indicates that we never use $a_i$ as the second vertex of $\dels$. The neighbors
    \begin{equation*}
        a_{i-k+1},a_{i-k+2},\dots,a_{i-1}
    \end{equation*}
    of $a_i$ have been deleted and other neighbors have not. It means that
    \begin{equation*}
        f(a_i)=(k+1)-(k-1)=2.
    \end{equation*}
    at the current step.
\end{proof}
From Lemma \ref{lm1}, we know that we only use $\dels(G,f,a_i,\phi(a_i))$ for $a_i\in V_1\backslash D$ if $f(\phi(a_i))\le 1$ at the step of deleting $a_i$.

It remains to investigate the function values on $V_2$. Note that $V_2$ is an independent set of $G$.
\begin{lemma}
    Under the assumptions above, after the deletion of $V_1$, $f(v)\ge 0$ for all $v\in V_2$.
\end{lemma}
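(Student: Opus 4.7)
The plan is to examine, for each fixed $v\in V_2$, how its $f$-value evolves during the deletion of $V_1$. First I would verify the following structural fact: since $G$ is $(2k+1)$-regular and $\phi$ assigns $v$ exactly $2k$ preimages among $\{a_i:i\in[k+1,s-k]\}$ (contributing $2k$ edges from $E_3$), the vertex $v$ has exactly one further neighbor $u_0\in D$ (contributing a single $E_2$-edge). Moreover $V_2$ is independent, so the only operations that can change $f(v)$ during the processing of $V_1$ are the removals of $u_0$ and of the $2k$ preimages of $v$ under $\phi$.

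The key structural observation is that the index of $u_0$ lies in $[1,k]\cup[s-k+1,s]$ by the definition of $D$, whereas the preimages of $v$ all have indices in $[k+1,s-k]$. Hence $u_0$ is deleted either strictly before all these preimages or strictly after all of them, and I would split the argument into these two cases. Combined with Lemma \ref{lm1}, the policy ``apply $\dels(G,f,a_i,\phi(a_i))$ when legal'' translates to: at each preimage $a_{i_j}$ of $v$, apply $\dels$ precisely when the current $f(v)\le 1$ (which is legal because $f(a_{i_j})=2$ by Lemma \ref{lm1}), and apply $\del$ otherwise.

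From here it is a short bookkeeping argument. Every $\del$ on a neighbor of $v$ lowers $f(v)$ by one, while every $\dels$ leaves $f(v)$ fixed. If $u_0$ has small index, then $f(v)$ drops from $k+1$ to $k$ when $u_0$ is removed, decreases to $1$ over the next $k-1$ $\del$-steps on preimages, and is held at $1$ by $\dels$ throughout the remaining $k+1$ preimage steps, ending at $1$. If $u_0$ has large index, then $f(v)$ falls from $k+1$ to $1$ over the first $k$ preimage deletions, is held at $1$ by the next $k$ $\dels$-operations, and drops to $0$ when $u_0$ is finally removed. In either case $f(v)\ge 0$ after $V_1$ has been emptied, as required.

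I do not expect a genuine obstacle; the only subtlety is to confirm that the threshold $f(v)=1$ is attained before the pool of $2k$ preimages is exhausted, which is immediate from $k\le 2k$, and to check that every $\del$-step we perform on a neighbor of $v$ starts from $f(v)\ge 1$ so that the resulting function stays nonnegative, which is exactly what the two cases above establish.
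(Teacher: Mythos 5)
Your argument is correct and follows essentially the same route as the paper's proof: track $f(v)$ through the removals of its one neighbor in $D$ and its $2k$ preimages under $\phi$, and use that $\dels(G,f,a_i,v)$ is applied exactly when $f(v)\le 1$ (legal since $f(a_i)=2$ by Lemma \ref{lm1}), so the preimage deletions never push $f(v)$ below the value it has after accounting for the single $\del$ coming from $D$. The explicit two-case bookkeeping on whether the $D$-neighbor is deleted before or after the preimages is a harmless elaboration of the paper's terser statement of the same idea.
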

\begin{proof}
    For each vertex $v\in V_2$, one neighbor $u$ is in $D$ and the remaining neighbors form the preimage $\phi^{-1}(v)$ with size $2k$. From Lemma \ref{lm1}, the deletion of each vertex in $\phi^{-1}(v)$ can never reduce its function $f(v)$ to $0$ because $\dels$ is used when $f(v)=1$. With considering the deletion of $u$, the value of $v$ is still non-negative.
\end{proof}
This completes the proof of $\nwd(G)\le k+1$. From Theorem \ref{thm2}, we know $\nwd(G)\ge k+1$ since $G$ is $(2k+1)$-regular. As a result, the construction of $G$ establishes the odd case of Theorem \ref{thm1}.
\begin{theorem}
\label{thm3}
    There exists a $(2k+1)$-regular graph with weak degeneracy $k+1$.
\end{theorem}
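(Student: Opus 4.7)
The plan is to combine the lower bound of Theorem \ref{thm2}, which immediately gives $\nwd(G) \ge \lfloor (2k+1)/2 \rfloor + 1 = k+1$ for every $(2k+1)$-regular graph $G$, with the explicit construction developed in the preceding pages. Thus the only remaining task is to certify that the particular graph $G$ with vertex set $V_1 \cup V_2$ and edge set $E_1 \cup E_2 \cup E_3$ satisfies $\nwd(G) \le k+1$.

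I would proceed by exhibiting a concrete legal deletion sequence. Starting from the constant function $f_1 \equiv k+1$, process $V_1$ in the order $a_1, b_1, c_1, a_2, b_2, c_2, \dots, a_s, b_s, c_s$ and finish with the independent set $V_2$. At each interior $a_i \in V_1 \setminus D$ apply $\dels(G, f, a_i, \phi(a_i))$, and use $\del$ everywhere else. The correctness then rests on three observations, already packaged as Lemma \ref{lm2}, Lemma \ref{lm1}, and the final lemma on $V_2$: legality within $V_1$ follows because the short-range adjacency pattern in \eqref{edgev1} ensures at most $k+1$ already-deleted neighbors at every step; legality of $\dels$ at an interior $a_i$ follows from the equality $f(a_i) = 2$, which comes from the length of the window $[1, k-1]$ defining $A$'s internal edges; and non-negativity on $V_2$ at the end follows because each $v \in V_2$ has $2k$ preimages under $\phi$, each of which triggered a $\dels$ exactly when $f(v) = 1$, leaving only the single decrement from the adjacent $D$-vertex to absorb.

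The main obstacle is really the combinatorial bookkeeping in the construction rather than the deletion argument itself. The pairing $\phi : \{a_i : i \in [k+1, s-k]\} \to V_2$ must be arranged so that each element of $V_2$ has a preimage of size exactly $2k$, and this forces the parameter matching $|V_2| = 3k(k+1)$ and $s = 6k^3 + 6k^2 + 2k$ so that supply equals demand. A secondary subtlety is the asymmetry between the adjacency windows: the chain $A$ uses $[1, k-1]$ while $B$ and $C$ use $[1, k]$, and this one-unit gap is precisely what leaves each $a_i$ with two units of slack at the moment of deletion, enough for one $\dels$ against $\phi(a_i)$. Once these parameters are pinned down, such a $\phi$ exists by straightforward set partitioning and the verification lemmas reduce to local checks on the adjacency pattern; together with Theorem \ref{thm2} supplying the matching lower bound, the theorem follows.
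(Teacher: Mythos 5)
Your proposal follows essentially the same route as the paper: the same construction with $V_1 = A\cup B\cup C$ and $V_2$, the same deletion order $a_1,b_1,c_1,\dots,a_s,b_s,c_s,V_2$ with $\dels(G,f,a_i,\phi(a_i))$ at the interior $a_i$'s, the same three verification lemmas, and the matching lower bound from Theorem \ref{thm2}. The argument is correct as outlined.
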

It is not hard to generalize this construction to the even case, but actually the even case is a direct consequence of the odd case by the following argument.
\begin{theorem}
\label{thm4}
    If $G$ is $d$-regular, then there exists a $(d+1)$-regular graph $G^{\prime}$ such that $\nwd(G^{\prime})\le \nwd(G)+1$.
\end{theorem}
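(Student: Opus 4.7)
The plan is to construct $G'$ by doubling $G$: take two vertex-disjoint copies $G_1, G_2$ of $G$, label the copy of $v \in V(G)$ in $G_j$ by $v^{(j)}$, and add a perfect matching $\{v^{(1)}v^{(2)} : v \in V(G)\}$ between the two copies. Each vertex then has degree $d+1$, so $G'$ is $(d+1)$-regular. Set $k = \nwd(G)$; the goal is to witness that $G'$ is weakly $(k+1)$-degenerate, starting from the constant function $f \equiv k+1$.

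I would process $G_1$ and then $G_2$ in two phases. Let $v_1, v_2, \ldots, v_n$ be a deletion order on $G$, with accompanying operations, witnessing weak $k$-degeneracy of $G$ starting from the constant function $g_1 \equiv k$; denote the functions on $G$ during this process by $g_1, g_2, \ldots, g_n$. In Phase~1, I apply on $G_1$ the same sequence: whenever the $G$-witness uses $\del$ on $v_i$, apply $\del(G',f,v_i^{(1)})$, and whenever it uses $\dels(G,g_i,v_i,v_j)$, apply $\dels(G',f,v_i^{(1)},v_j^{(1)})$. The key invariant to maintain by induction on $i$ is: at the step of removing $v_i^{(1)}$, every undeleted $v_\ell^{(1)}$ satisfies $f(v_\ell^{(1)}) = g_i(v_\ell) + 1$. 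This holds initially, and the inductive step works because the only neighbor of $v_i^{(1)}$ that does not correspond to a $G$-neighbor of $v_i$ is $v_i^{(2)}$, which lies outside $G_1$. The strict inequality required by $\dels$ survives the uniform $+1$ shift, and nonnegativity of the updated $f$ on $G_1$ follows from the invariant together with nonnegativity of the updated $g$ in the $G$-witness.

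For Phase~2, I track the values on $G_2$ at the end of Phase~1. Each $v_i^{(2)}$ is adjacent in $G'$ only to $v_i^{(1)}$ (among $G_1$-vertices) and to its $G$-neighbors inside $G_2$, and I never use a second-copy vertex as the saved vertex of $\dels$. Hence $f(v_i^{(2)})$ is decreased by exactly $1$ across the whole Phase~1, namely at the step when $v_i^{(1)}$ is removed, regardless of whether $\del$ or $\dels$ is used there. Thus at the start of Phase~2 we have $f \equiv k$ on $V(G_2)$, and $G_2$ is an induced copy of $G$ (no edges remain to any $G_1$-vertex). Applying the weak $k$-degeneracy witness of $G$ to $G_2$ finishes the deletion, so $\nwd(G') \le k+1 = \nwd(G)+1$. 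The main, modest, obstacle is the Phase~1 bookkeeping: once the $+1$ invariant is set up, legality and nonnegativity are one-line checks, and the decoupling of $G_2$ from $G_1$ after Phase~1 is automatic from the doubling construction.
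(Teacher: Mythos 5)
Your proof is correct, but your construction differs from the paper's. You double $G$: two disjoint copies joined by a perfect matching, then run the weak-$k$-degeneracy witness on the first copy (with all values shifted up by $1$, which preserves both the strict inequality needed for $\dels$ and nonnegativity, since the only neighbor of $v_i^{(1)}$ not accounted for by the $G$-witness is $v_i^{(2)}$, lying outside the first copy), and observe that the second copy is left as an untouched copy of $G$ with the constant function $k$. The paper instead takes $d+1$ disjoint copies of $G$ and, for each $v\in V(G)$, adds one new vertex adjacent to all $d+1$ copies of $v$; these new vertices form an independent set, so one deletes them first by $\del$ (each surviving vertex loses exactly $1$), and what remains is $d+1$ disjoint copies of $G$ with the constant function $\nwd(G)$, to which the definition applies directly with no bookkeeping. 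The trade-off: the paper's construction avoids any invariant-tracking --- legality is immediate --- at the cost of a graph on $(d+2)|V(G)|$ vertices, whereas yours produces a much smaller graph on $2|V(G)|$ vertices but requires the (correct, and correctly identified) ``$+1$ shift'' invariant through Phase 1. Both arguments are complete and yield the stated bound $\nwd(G')\le \nwd(G)+1$.
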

\begin{proof}
    We give the construction of $G^{\prime}$ explicitly. We collect $d+1$ copies of $G$ and add a common neighbor for $d+1$ copies of each vertex in $G$. See Figure \ref{fig3}.
\begin{figure}[h!]
\centering
\includegraphics[width=0.4\linewidth]{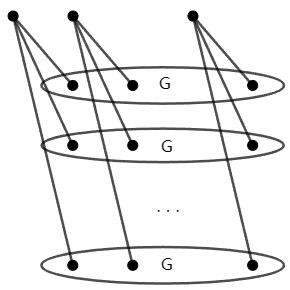}
\caption{The construction of $G^{\prime}$}
\label{fig3}
\end{figure}
    To be specific, let 
    \begin{equation*}
       V(G^{\prime}):=V(G)\times \{0,1,2,\dots, d+1\}.
    \end{equation*}
    For $i\in\{1,2,\dots, d+1\}$, $(u,i)$ and $(v,i)$ are adjacent if $uv\in E(G)$. Also, $(v,i)$ and $(v,0)$ are adjacent for each $i\in\{1,2,\dots, d+1\}$ and $v\in V(G)$. These are all the edges of $G^{\prime}$. It is trivial to check $G^{\prime}$ is $(d+1)$-regular.
    
    We need to show $\nwd(G^{\prime})\le \nwd(G)+1$, so we start with a constant function of value $\nwd(G)+1$. 
    We can use $\del$ operations for each $(v,0)\in V(G^{\prime})$. These vertices form an independent set and these deletion operations are legal. 
    The remaining graph is $d+1$ copies of $G$ and the function value is a constant $\nwd(G)$. By the definition of weak degeneracy, we can legally remove each copy of $G$ by $\del$ and $\dels$. As a result, we have $\nwd(G^{\prime})\le \nwd(G)+1$.

\end{proof}

We end with a proof of Theorem \ref{thm1}.
\begin{proof}[Proof of Theorem \ref{thm1}]
    When $d$ is an odd number, the statement holds because of Theorem \ref{thm3}. When $d$ is an even number, from Theorem \ref{thm3}, we have a $(d-1)$-regular graph with weak degeneracy $d/2$. By Theorem \ref{thm4}, we have a $d$-regular graph with weak degeneracy at most $d/2+1$. By Theorem \ref{thm2}, its weak degeneracy is at least $d/2+1$. Therefore, we have a $d$-regular graph with weak degeneracy $d/2+1$.
\end{proof}

\section*{Acknowledgements}
Research supported by ``the Fundamental Research Funds for the Central University'' in China.

%
%

\end{document}